\newtheorem{theorem}{Theorem}
\newtheorem{corollary}[theorem]{Corollary}
\theoremstyle{definition}
\newtheorem{remark}[theorem]{Remark}
\begin{document}

\title
[Graded algebras with prescribed Hilbert series]
{Graded algebras with prescribed Hilbert series}
\author[Vesselin Drensky]{Vesselin Drensky}
\date{}
\address{Institute of Mathematics and Informatics,
Bulgarian Academy of Sciences,
Acad. G. Bonchev Str., Block 8,
1113 Sofia, Bulgaria}
\email{drensky@math.bas.bg}

\thanks
{Partially supported by Grant KP-06 N 32/1 of 07.12.2019
``Groups and Rings -- Theory and Applications'' of the Bulgarian National Science Fund.}

\subjclass[2010]
{16P90; 16S15; 16W50; 11J81; 30B10.}
\keywords{Graded algebras, monomial algebras, Hilbert series, algebraic power series, transcendental power series.}
\maketitle

\begin{abstract}
For any power series $a(t)$ with exponentially bounded nonnegative integer coefficients
we suggest a simple construction of a finitely generated monomial associative algebra $R$ with Hilbert series
$H(R,t)$ very close to $a(t)$.
If $a(t)$ is rational/algebraic/transcendental, then the same is $H(R,t)$.
If the growth of the coefficients of $a(t)$ is polynomial, in the same way we construct a graded algebra $R$
preserving the polynomial growth of the coefficients of its Hilbert series $H(R,t)$.
Applying a classical result of Fatou from 1906 we obtain that if a finitely generated graded algebra $R$ has a finite Gelfand-Kirillov dimension, then
its Hilbert series is either rational or transcendental.
In particular the same dichotomy holds for the Hilbert series of finitely generated algebras $R$ with polynomial identity.
\end{abstract}

\section*{Introduction}
We consider finitely generated unitary associative algebras $R$ over an arbitrary field $K$ of any characteristic.
The algebra $R$ is {\it graded} if $R$ is a direct sum of vector subspaces $R_0,R_1,R_2,\ldots$ called {\it homogeneous components} of $R$
and
\[
R_mR_n\subset R_{m+n},\quad m,n=0,1,2,\ldots.
\]
In the sequel we consider graded algebras only.
We assume that $R_0=0$ or $R_0=K$ and the generators of $R$ are of first degree.
The formal power series
\[
H(R,t)=\sum_{n\geq 0}\dim(R_n)t^n,
\]
is called the {\it Hilbert series} of $R$. In the sequel, when we speak about algebraic and transcendental power series
we shall mean over ${\mathbb Q}(t)$.

Let $K\langle X_d\rangle=K\langle x_1,\ldots,x_d\rangle$ be the free $d$-generated unitary associative algebra
and let $\langle X_d\rangle$ be the set of all monomials in $K\langle X_d\rangle$.
Any $d$-generated algebra $R$ is a homomorphic image of $K\langle X_d\rangle$ modulo an ideal $I$.
If the ideal $I$ is finitely generated we say that $R$ is {\it finitely presented}. If
\[
U=\{u_j\in \langle X_d\rangle\mid j\in J\}
\]
is a set of monomials, then the factor algebra $R=K\langle X_d\rangle/I$ of $K\langle X_d\rangle$
modulo the ideal $I=(U)$ generated by $U$ is a {\it monomial algebra}.

In the case of polynomial growth of the coefficients of the Hilbert series $H(R,t)$
a precise way to measure the growth is by the Gelfand-Kirillov dimension.
If $R$ is an algebra (not necessarily graded) generated by a finite dimensional vector space $V$, then the {\it growth function}
of $R$ is defined by
\[
g_V(n)=\dim(R^n),\quad R^n=V^0+V^1+V^2+\cdots+V^n,\quad n=0,1,2,\ldots,
\]
and the {\it Gelfand-Kirillov dimension} is
\[
\text{GKdim}(R)=\limsup_{n\to\infty}\log_n(g_V(n)).
\]
For a background on Gelfand-Kirillov dimension see the book by Krause and Lenagan \cite{KL}.
For finitely generated commutative algebras the Gelfand-Kirillov dimension is always an integer.
In the noncommutative case $\text{GKdim}(R)\in \{0,1\}\cup [2,\infty)$.
By the Bergman Gap Theorem $\text{GKdim}(R)\not\in (1,2)$.
Borho and Karft \cite{BK} constructed examples of algebras $R$ such that $\text{GKdim}(R)=\alpha$
for any positive real number $\alpha\geq 2$.

Govorov \cite{G1} proved that if the set of monomials $U$ is finite, then
the Hilbert series of the monomial algebra $R=K\langle X\rangle/(U)$ can be expressed as a rational function.
He conjectured \cite{G1, G2} that the same holds for the Hilbert series of finitely presented graded algebras.
Shearer \cite{Sh} constructed a finitely presented graded algebra with algebraic nonrational Hilbert series.
As he mentioned the same construction gives also an example with a transcendental Hilbert series.
A simpler example of finitely presented algebra with algebraic nonrational Hilbert series was given by Kobayashi \cite{K}.

It is well known that if the Hilbert series $H(R,t)$ is algebraic,
then its coefficients grow either exponentially or polynomially.
Power series with {\it intermediate  growth} (faster than polynomial and slower than exponential)
are transcendental. In \cite{G1} Govorov constructed also a two-generated monomial algebra
such that the sequence of the dimensions $\dim(R_n)$
grows intermediately, i.e., its Hilbert series is not algebraic.
Other examples of finitely generated associative algebras
with Hilbert series with coefficients of intermediate growth are universal enveloping algebras of infinite dimensional Lie algebras
of polynomial growth, see Smith \cite{Sm} and Lichtman \cite{L}.
Petrogradsky [P] introduced a refined scale for measuring the growth of algebras with intermediate growth.

The algebras in the examples of Smith \cite{Sm}, Lichtman \cite{L}, and Petrogradsky \cite{P} are not finitely presented.
There was a conjecture of Borho and Kraft [BK] that finitely
presented associative algebras cannot be of intermediate growth.
For a counterexample it is sufficient to show that there exists a finitely presented and
infinite dimensional Lie algebra with polynomial growth. The
easiest example is the Witt algebra $L$ of the derivations of $K[z]$.
The first example of a finitely presented graded algebra with Hilbert series with intermediate growth of the coefficients
was given by Ufnarovskij \cite{U}. (In his example the algebra is two-generated by elements of degree 1 and 2.)
See also the recent paper by Ko\c{c}ak \cite{Ko} for more examples and a survey on finitely presented algebras of intermediate growth.

A result of Macaulay \cite{M} gives that the coefficients of the Hilbert series of a finitely generated commutative algebras
are a subject of many restrictions. It has turned out that the situation is completely different for noncommutative algebras.
In the present paper we give a construction of a monomial algebra $R$ with Hilbert series which is close to an arbitrary
power series $a(t)$ with nonnegative integer coefficients and exponentially bounded growth of the coefficients.
Our approach is in the spirit of the approach of Borho and Kraft \cite{BK}
and its modification in the book of the author \cite[Theorem 9.4.11]{D}.
Using the same ideas we prove a version for graded algebras.
The constructions transfers the properties of $a(t)$ to $H(R,t)$. If $a(t)$ is rational, algebraic or transcendental, then the same is $H(R,t)$.

A classical theorem of Fatou \cite{F} from 1906 gives that if the coefficients of the power series $a(t)$
with integer coefficients are of polynomial growth and
$a(t)$ is algebraic, then it is rational. This immediately implies that the Hilbert series of graded algebras of finite Gelfand-Kirillov dimension
are either rational or transcendental. A theorem of Berele \cite{B} states that finitely generated algebras with polynomial identity (or PI-algebras)
are of finite Gelfand-Kirillov dimension. As a consequence we obtain that the same dichotomy holds
for the Hilbert series of finitely generated graded PI-algebras.

\section{The construction}

In this section we present constructions for monomial and graded algebras with Hilbert series close to a preliminary given power series.

\begin{theorem}\label{main theorem}
Let
\[
a(t)=\sum_{n\geq 0}a_nt^n
\]
be a power series with nonnegative integer coefficients. Let $d$ be a positive integer such that $a_n\leq d^n$, $n=0,1,2,\ldots$.
Then for any integer $p=0,1,2$, there exists a $(d+1)$-generated monomial algebra $R$ such that its Hilbert series is
\begin{equation}\label{the first case of Hilbert series}
H(R,t)=\frac{1}{1-dt}+\frac{t}{(1-dt)^2}+\frac{t^2a(t)}{(1-dt)^p}.
\end{equation}
\end{theorem}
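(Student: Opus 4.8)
The plan is to realize $R$ as a monomial algebra whose basis of normal words I prescribe directly. Recall that for a monomial algebra $K\langle X\rangle/(U)$ the set $N$ of normal words (those having no $u\in U$ as a factor) is a linear basis, so that $H(R,t)=\sum_{w\in N}t^{\deg w}$, and $N$ is \emph{factorial}: every factor of a word in $N$ again lies in $N$. Conversely, if I start from any factorial set $N\ni 1$ of monomials and let $U$ consist of the minimal monomials outside $N$, then $N$ is exactly the normal-word set of $K\langle X\rangle/(U)$: a word $v\notin N$ has a shortest factor $u\notin N$, all of whose proper factors lie in $N$, so $u\in U$; and no $w\in N$ can contain a factor in $U$ by factoriality. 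Hence it suffices to exhibit a factorial set $N$ of monomials in the $d+1$ generators $x_1,\dots,x_d,y$ whose length-generating function equals the right-hand side of \eqref{the first case of Hilbert series}; the bound $a_n\le d^n$ will be used only to have enough room for the construction.

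First I would fix, for every $n$, a set $W_n$ of exactly $a_n$ distinct words of length $n$ in $\langle X_d\rangle$; this is possible precisely because $a_n\le d^n$, and the associated series $\sum_n|W_n|t^n=a(t)$ is independent of the choice. Writing $W=\bigcup_nW_n$, I would then define $N$ as the disjoint union of three families, sorted by the number of occurrences of the distinguished generator $y$. Type A consists of all words in $\langle X_d\rangle$, contributing $\sum_{n\ge0}d^nt^n=1/(1-dt)$ (the empty word giving the required constant term). Type B consists of all words $uyv$ with $u,v\in\langle X_d\rangle$, containing a single $y$, contributing $\sum_{m\ge1}md^{m-1}t^m=t/(1-dt)^2$. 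Type C consists of the two-$y$ words whose block between the two occurrences of $y$ lies in $W$, subject to a positional constraint dictated by $p$: for $p=0$ I keep only $ywy$ with $w\in W$; for $p=1$ only $ywyv$ with $w\in W$, $v\in\langle X_d\rangle$; for $p=2$ all $uywyv$ with $w\in W$ and $u,v\in\langle X_d\rangle$.

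By unique decomposition of a word at its occurrences of $y$, the three families are pairwise disjoint and each word is counted once, so the Type C generating functions are $t^2a(t)$, $t^2a(t)/(1-dt)$, and $t^2a(t)/(1-dt)^2$ for $p=0,1,2$ respectively; summing the three contributions yields exactly \eqref{the first case of Hilbert series}. Since each $x_i$ is of Type A and $y$ of Type B, all $d+1$ generators survive in $R$, so $R$ is genuinely $(d+1)$-generated with $R_0=K$.

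The step I expect to be the crux is verifying that $N$ is factorial, for it is here that the positional constraints must be seen to be compatible with passing to factors. A factor of a Type A or Type B word is again of Type A or B. For a Type C word the only delicate factors are those containing both copies of $y$: since the two $y$'s are separated by exactly the block $w$, any such factor has the form $u'ywyv'$ with the \emph{same} middle block $w\in W$, and the positional constraint (empty prefix/suffix where required) is inherited, so the factor is again of Type C; a factor containing at most one $y$ drops to Type B or A. Once factoriality is established, taking $U$ to be the minimal monomials outside $N$ produces the desired $(d+1)$-generated monomial algebra, where $U$ is allowed to be infinite -- consistent with the fact that for transcendental $a(t)$ the series $H(R,t)$ cannot be rational.
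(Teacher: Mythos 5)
Your proposal is correct and is essentially the paper's construction: the algebra you build (words with at most two occurrences of $y$, the block between the two $y$'s confined to a chosen set of $a_n$ words in each degree $n$, with the prefix/suffix killed for $p=1,0$) coincides with the one in the paper, and your generating-function count matches its computation. The only difference is presentational: you prescribe the factorial set of normal words and recover the defining monomials $U$ as the minimal words outside it, whereas the paper writes down $U$ first (forbidding three $y$'s and $ywy$ with $w\notin A$) and then reads off the basis.
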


\begin{proof}
We shall work in the $(d+1)$-generated free algebra $K\langle X_d,y\rangle$.
Since the dimension of the homogeneous component $K\langle X_d\rangle_n$ of degree $n$
of the free subalgebra $K\langle X_d\rangle$ of $K\langle X_d,y\rangle$ is equal to $d^n$
we can choose a subset $A$ of monomials such that
\[
A=A_0\cup A_1\cup A_2\cup\cdots,\quad A_n\subset \langle X_d\rangle_n,\quad \vert A_n\vert=a_n,\quad n=0,1,2,\ldots.
\]
Let the subset $U$ of $\langle X_d,y\rangle$ consist of the following monomials
\begin{equation}\label{monomial defining relations}
yu_1(X_d)yu_2(X_d)y,\quad yw(X_d)y,\quad u_1(X_d),u_2(X_d)\in \langle X_d\rangle, \quad w(X_d)\in \langle X_d\rangle\setminus A.
\end{equation}
Then the monomial algebra $R=K\langle X_d,y\rangle/(U)$ has a basis consisting of
\begin{equation}\label{basis of R}
u(X_d),\quad u_1(X_d)yu_2(X_d),\quad u_1(X_d)yv(X_d)yu_2(X_d),
\end{equation}
\[
u(X_d),u_1(X_d),u_2(X_d)\in \langle X_d\rangle,\quad v(x_d)\in A.
\]
Using that the Hilbert series of $K\langle X_d\rangle$ is equal to $\displaystyle \frac{1}{1-dt}$
we obtain that the Hilbert series of $R$ is
\[
H(R,t)=\frac{1}{1-dt}+\frac{t}{(1-dt)^2}+\frac{t^2a(t)}{(1-dt)^2}.
\]
This is the Hilbert series (\ref{the first case of Hilbert series}) for $p=2$.
Adding to the relations (\ref{monomial defining relations}) the monomials $x_iyv(X_d)y$, $x_i\in X_d$, $v(X_d)\in A$,
we remove from the basis (\ref{basis of R}) the monomials $u_1(X_d)yv(X_d)yu_2(X_d)$ with $u_1(X_d)\not=1$.
Then the Hilbert series of $R$ is
(\ref{the first case of Hilbert series}) for $p=1$.
Finally, adding to the relations also the monomials $yv(X_d)yx_i$, $x_i\in X_d$, $v(X_d)\in A$, we handle also the case $p=0$.
\end{proof}

\begin{corollary}\label{corollary for graded algebras}
Let $a(t)$ be a power series which satisfies the assumptions of Theorem \ref{main theorem}.
Then for any nonnegative integers $p,q$, $p+q\leq 2$, there exists a $(d+1)$-generated graded algebra $R$ such that its Hilbert series is
\[
H(R,t)=\frac{1}{1-dt}+\frac{t}{(1-dt)^2}+\frac{t^2a(t)}{(1-dt)^p(1-t)^{dq}}.
\]
\end{corollary}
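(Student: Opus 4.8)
The plan is to keep the generators $x_1,\dots,x_d,y$ and to reuse the construction of Theorem~\ref{main theorem}, modifying only the behaviour of the two ``tail slots'' $u_1(X_d)$ and $u_2(X_d)$ that flank the inner factor $yv(X_d)y$ in the third type of basis monomial in (\ref{basis of R}). The key observation is that this third summand factorizes as $t^2$ (the two letters $y$) times $a(t)$ (the factor $v\in A$) times one contribution per surviving tail slot, and that each slot can be arranged independently to contribute either $1/(1-dt)$, or $1/(1-t)^d$, or $1$. Starting from the case $p=2$ of Theorem~\ref{main theorem}, where both slots are free, I would treat each of the two slots in one of three ways: leave it free (contributing $1/(1-dt)$); remove it by the relations already used in the proof of Theorem~\ref{main theorem} (adding $x_iyv(X_d)y$ to force $u_1=1$, or $yv(X_d)yx_i$ to force $u_2=1$), contributing the factor $1$; or force the variables inside it to commute, contributing $1/(1-t)^d$. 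Choosing $p$ free slots, $q$ commutative slots and $2-p-q$ removed slots gives the denominator $(1-dt)^p(1-t)^{dq}$ and explains the constraint $p+q\le 2$.

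To make the right slot $u_2$ commutative I would adjoin to $U$ the homogeneous binomial relations
\[
yv(X_d)yw(X_d)(x_ix_j-x_jx_i),\quad v\in A,\ w(X_d)\in\langle X_d\rangle,\ 1\le i<j\le d,
\]
and symmetrically, to make the left slot $u_1$ commutative, the relations
\[
(x_ix_j-x_jx_i)w(X_d)yv(X_d)y,\quad v\in A,\ w(X_d)\in\langle X_d\rangle,\ 1\le i<j\le d.
\]
Since the marker $yv(X_d)y$ with $v\in A$ occurs only in the third type of monomial, these relations never touch the monomials $u(X_d)$ and $u_1(X_d)yu_2(X_d)$ of the first two types, so the summands $1/(1-dt)$ and $t/(1-dt)^2$ are unaffected. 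The two families act on disjoint blocks ($u_1$ to the left of the marker, $u_2$ to the right), so they may be combined freely, and the resulting algebra is still generated by the $d+1$ elements $x_1,\dots,x_d,y$. I should stress that we no longer obtain a monomial algebra, but the statement only asks for a graded algebra, and the added relations are homogeneous, so $R$ remains graded.

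The main point to verify is that the defining relations yield exactly the intended basis, namely the monomials in (\ref{basis of R}) in which each commutative slot is replaced by a sorted monomial $x_1^{e_1}\cdots x_d^{e_d}$. I would prove this by exhibiting a terminating and confluent reduction system: the monomial relations of $U$ rewrite to $0$ every word with three or more letters $y$ or with an inner block not in $A$, exactly as in Theorem~\ref{main theorem}, while the binomial relations perform adjacent-transposition sorting inside each designated tail block. Because the relations are included for every prefix $w(X_d)$ of the block, every adjacent pair can be transposed, so the normal forms inside a commutative block are precisely the sorted monomials, counted by $1/(1-t)^d$. The step requiring care is confluence of the overlaps between a binomial relation and a monomial relation; but such overlaps occur only around the marker $yv(X_d)y$, which both kinds of relation preserve (they neither create nor destroy letters $y$ nor alter $v$), so they resolve trivially, and the sorting systems for $u_1$ and $u_2$ commute since they operate on disjoint portions of each word. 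Counting normal forms then gives that each free slot contributes $1/(1-dt)$ and each commutative slot contributes $1/(1-t)^d$, which produces the asserted Hilbert series.
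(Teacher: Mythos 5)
Your proposal is correct and takes essentially the same approach as the paper: the paper likewise modifies the construction of Theorem~\ref{main theorem} by adding homogeneous relations $x_{i_{\sigma(1)}}\cdots x_{i_{\sigma(n)}}yv(X_d)y=x_{i_1}\cdots x_{i_n}yv(X_d)y$ (which generate exactly the same ideal as your commutator relations, via closure under left multiplication), making a tail slot commutative so that it contributes $1/(1-t)^d$, and handles the remaining cases $p+q\le 2$ with the slot-removal relations of Theorem~\ref{main theorem}. Your write-up only differs in making the slot-by-slot bookkeeping and the normal-form counting more explicit than the paper's brief ``similar arguments'' remark.
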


\begin{proof}
We add to (\ref{monomial defining relations}) the relations
\[
x_{i_{\sigma(1)}}\cdots x_{i_{\sigma(n)}}yv(X_d)y=x_{i_1}\cdots x_{i_n}yv(X_d)y,
\]
where $x_{i_1}\cdots x_{i_n}\in \langle X_d\rangle$, $v(X_d)\in A$, and $\sigma$ runs on the symmetric group
$S_n$ of degree $n$. Then the Hilbert series of the graded algebra $R$ becomes
\[
H(R,t)=\frac{1}{1-dt}+\frac{t}{(1-dt)^2}+\frac{t^2a(t)}{(1-dt)(1-t)^d},
\]
which is the case $p=q=1$.
With similar arguments, as in the proof of Theorem \ref{main theorem} we produce examples of graded algebras for the other cases $p+q\leq 2$.
\end{proof}

\begin{remark}\label{remark xwx}
If we add to the monomials (\ref{monomial defining relations}) the relations $x_iyx_j$, $x_i,x_j\in X_d$
(and the relations $x_iy^2$ and $y^2x_j$ if $1\in A$), then we shall construct a monomial algebra $R$ with Hilbert series
\[
H(R,t)=\frac{1+2t}{1-dt}-t+t^2a(t).
\]
\end{remark}

\begin{theorem}\label{polynomially bounded graded algebras}
Let $a(t)$ be a power series with nonnegative integer coefficients $a_n$. Let $d$ be a positive integer such that
$\displaystyle a_n\leq \binom{d+n-1}{n-1}$, $n=0,1,2,\ldots$.
Then for any integer $p=0,1,2$, there exists a $(d+1)$-generated graded algebra $R$ such that its Hilbert series is
\[
H(R,t)=\frac{1}{(1-t)^d}+\frac{t}{(1-t)^{2d}}+\frac{t^2a(t)}{(1-t)^{dp}}.
\]
\end{theorem}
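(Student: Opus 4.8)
The plan is to rerun the construction of Theorem \ref{main theorem}, symmetrizing all free factors as in Corollary \ref{corollary for graded algebras} so that every occurrence of the free Hilbert series $\frac{1}{1-dt}$ becomes the commutative one $\frac{1}{(1-t)^d}$. Concretely, I would work in $K\langle X_d,y\rangle$, impose the homogeneous commutativity relations $x_ix_j=x_jx_i$ for all $i,j$, and retain the same three shapes of monomials as in (\ref{basis of R})---namely $u(X_d)$, $u_1(X_d)\,y\,u_2(X_d)$, and $u_1(X_d)\,y\,v\,y\,u_2(X_d)$---now with the $x$-parts read inside $K[x_1,\dots,x_d]$ and with the middle block $v$ allowed to involve $y$ as well. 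Since the Hilbert series of $K[x_1,\dots,x_d]$ is $\frac{1}{(1-t)^d}$, the first family contributes $\frac{1}{(1-t)^d}$ and the one-$y$ family contributes $t\cdot\frac{1}{(1-t)^d}\cdot\frac{1}{(1-t)^d}=\frac{t}{(1-t)^{2d}}$, reproducing the first two summands verbatim.

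For the third summand I would select, for each $n$, a set $A_n$ of admissible middle monomials of degree $n$ with $|A_n|=a_n$, so that the generating function of $A=\bigcup_n A_n$ is $a(t)$ and the corresponding family contributes $t^2a(t)\cdot\frac{1}{(1-t)^{2d}}$, which is the case $p=2$. The hypothesis $a_n\le\binom{d+n-1}{n-1}$ is precisely the inequality guaranteeing that such a choice exists: using the hockey-stick identity $\binom{d+n-1}{n-1}=\binom{d+n-1}{d}=\sum_{j=0}^{n-1}\binom{j+d-1}{d-1}$, the right-hand side is the number of degree-$n$ monomials available for the middle block once $y$ is permitted to carry a positive part of the degree, i.e. the count of commutative monomials of degree $n$ in $x_1,\dots,x_d,y$ that involve $y$. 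This is exactly what makes the larger bound $\binom{d+n-1}{n-1}$, rather than the naive $\binom{d+n-1}{d-1}$, the correct threshold.

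The cases $p=1$ and $p=0$ are then obtained exactly as at the end of the proof of Theorem \ref{main theorem}, by adjoining the relations $x_i\,y\,v\,y$ and $y\,v\,y\,x_i$ ($x_i\in X_d$, $v\in A$) to delete the outer factors $u_1$ and $u_2$ one at a time, turning $\frac{1}{(1-t)^{2d}}$ into $\frac{1}{(1-t)^{d}}$ and then into $1$ in the third summand. All the imposed relations are homogeneous, so $R$ remains graded with generators of first degree, of the required type.

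The main obstacle I anticipate is combinatorial bookkeeping rather than any new idea. One must verify that after imposing $x_ix_j=x_jx_i$ the three families still give a $K$-basis with no unexpected collapses---in particular that a single $y$ continues to separate $u_1$ from $u_2$, so that the one-$y$ count is genuinely $\frac{1}{(1-t)^{2d}}$ and not $\frac{1}{(1-t)^{d}}$---and that the set of admissible degree-$n$ middle blocks has size exactly $\binom{d+n-1}{n-1}$, so that the stated bound on $a_n$ is the sharp condition for choosing $A_n$. Pinning down precisely which middle monomials are admissible and checking stability of the basis under the extra relations for each value of $p$ is where the care is required.
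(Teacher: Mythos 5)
Your handling of the first two summands and of the passage from $p=2$ to $p=1,0$ coincides with the paper's proof: one starts from $S=K\langle X_d,y\rangle/(x_ix_j-x_jx_i)$, so that words with no $y$ contribute $\frac{1}{(1-t)^d}$, words $u_1yu_2$ with $u_1,u_2\in[X_d]$ contribute $\frac{t}{(1-t)^{2d}}$, and the extra monomials $x_iyvy$, $yvyx_i$ kill the outer factors. The genuine gap is in your treatment of the third summand, where you allow the middle block $v$ to involve $y$. First, your count is wrong in the ambient algebra: only the $x$'s commute there (they must not commute with $y$, or else $u_1yu_2=u_1u_2y$ and the second summand collapses to $\frac{t}{(1-t)^d}$), so $x_1y$ and $yx_1$ are distinct words, and the number of degree-$n$ words involving $y$ is not $\binom{d+n-1}{n-1}$; that binomial counts monomials of the fully commutative ring $K[x_1,\ldots,x_d,y]$, which is not where $v$ lives. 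Second, and more fundamentally, in a monomial quotient the discarded words span an ideal, so the surviving words must be closed under taking subwords. If $v$ contains a $y$, then $u_1yvyu_2$ contains at least three $y$'s and hence contains proper subwords $yv'y$ in which $v'$ is a sub-block of $v$; survival of $u_1yvyu_2$ forces all of these sub-blocks to be admissible as well. Consequently the sets $A_n$ cannot be chosen freely with $\vert A_n\vert=a_n$. Concretely, take $d=1$ and $a(t)=2t^2$, which satisfies your (and the paper's) hypothesis $a_n\leq\binom{n}{n-1}=n$: you would need two admissible blocks of degree $2$, but $[X_1]_2=\{x_1^2\}$ supplies only one, and any degree-$2$ block containing $y$ (namely $x_1y$, $yx_1$, $y^2$) forces admissible blocks of degree $0$ or $1$, contradicting $a_0=a_1=0$. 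So the construction you propose cannot realize every series allowed by the stated bound; what you dismissed as ``combinatorial bookkeeping'' is an actual obstruction.

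The paper avoids all of this by keeping every middle block inside $[X_d]$, i.e., $A_n\subset[X_d]_n$, so that surviving words have at most two $y$'s and the subword-closure issue never arises. What that construction genuinely requires is $a_n\leq\dim K[X_d]_n=\binom{d+n-1}{n}=\binom{d+n-1}{d-1}$; the subscript $n-1$ in the theorem's hypothesis is best read as a misprint for this bound (the two agree only at $n=d$, and for $n>d$ the stated bound even exceeds the number of available commutative monomials). It is not a signal that $y$ may enter the middle block, and building your choice of $A_n$ on that reading is precisely what breaks your argument.
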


\begin{proof}
We start with a graded algebra $S$ which is a factor algebra of the free algebra $K\langle X_d,y\rangle$ modulo the relations
$x_ix_j=x_jx_i$, $x_i,x_j\in X_d$. Hence the monomials in $X_d$ in $S$ behave as the set $[X_d]$ of monomials in the polynomial algebra $K[X_d]$.
As in the proof of Theorem \ref{main theorem}, working in $S$,
we choose a subset $A=A_0\cup A_1\cup A_2\cup\cdots$ such that the monomials $A_n$ are of degree $n$ and $\vert A_n\vert=a_n$.
Adding the relations
\[
yu_1(X_d)yu_2(X_d)y,\quad yw(X_d)y,\quad u_1(X_d),u_2(X_d)\in [X_d],\quad w(X_d)\in [X_d]\setminus A,
\]
we obtain a graded algebra $R$ with Hilbert series
\[
H(R,t)=\frac{1}{(1-t)^d}+\frac{t}{(1-t)^{2d}}+\frac{t^2a(t)}{(1-t)^{2d}}.
\]
The other two cases $p=0$ and $p=1$ are completed as the corresponding cases in Theorem \ref{main theorem}.
\end{proof}

\section{Dichotomy for graded algebras of polynomial growth}

The condition that a power series with nonnegative integer coefficients is algebraic is very restrictive.
We shall use the following partial case of a theorem of Fatou \cite{F} from 1906.

\begin{theorem}\label{Theorem of Fatou}
If the coefficients of a power series are integers and are bounded polynomially,
then the series is either rational or transcendental.
\end{theorem}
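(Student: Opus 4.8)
The plan is to recast the statement in its contrapositive form: an algebraic power series $a(t)\in\mathbb{Z}[[t]]$ with polynomially bounded coefficients must be rational. First I would control the radius of convergence. A bound $|a_n|\le Cn^k$ forces $\limsup_n|a_n|^{1/n}\le 1$, so the radius of convergence is at least $1$. If it were strictly greater than $1$, then $a_n\to 0$; being integers, all but finitely many $a_n$ vanish, $a(t)$ is a polynomial, and we are done. Thus I may assume the radius of convergence equals $1$ and that infinitely many coefficients are nonzero.

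Next I would exploit algebraicity to rule out a natural boundary. Since $a(t)$ is algebraic over $\mathbb{Q}(t)$, it satisfies an irreducible equation $P(t,a(t))=0$ with $P\in\mathbb{Q}[t,y]$, and therefore continues analytically along every path in $\mathbb{C}$ avoiding the finite set consisting of the zeros of the leading coefficient and of the discriminant of $P$ (the poles and branch points). In particular $a(t)$ is analytic at all but finitely many points of the circle $|t|=1$, so this circle is \emph{not} a natural boundary for $a(t)$.

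The decisive step is then the dichotomy of Fatou (in the refined form conjectured by P\'olya and established by Carlson): a power series with integer coefficients and radius of convergence exactly $1$ is either a rational function or has the unit circle as a natural boundary. Having excluded the natural boundary in the previous step, I conclude that $a(t)$ is rational, which is exactly what the contrapositive requires. I expect this dichotomy to be the genuine obstacle: the reductions above are elementary, whereas the Fatou--Carlson theorem is the deep arithmetic input, and it is precisely the classical result that the statement abbreviates.

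For a more self-contained route one could instead argue through singularity analysis: the dominant singularities of the algebraic function lie on $|t|=1$, and if $a(t)$ were non-rational it would carry a genuine branch point, whose Puiseux exponent is a non-integer rational. The transfer theorem then expresses $a_n$ as a sum $\sum_j c_j\alpha_j^{-n}n^{\gamma_j}$ in which some exponents $\gamma_j$ are non-integers and the constants $c_j$ involve reciprocals of $\Gamma$-values. The remaining difficulty --- and the reason I would prefer to cite Fatou directly --- is to show that such a combination cannot take integer values for all large $n$; this integrality obstruction, rather than any mere growth estimate, is where the real work lies.
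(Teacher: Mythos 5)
Your proof is correct, but the comparison here is necessarily lopsided: the paper does not prove this statement at all --- it is presented as a ``partial case of a theorem of Fatou'' and justified solely by the citation to Fatou's 1906 paper. Your reductions are sound: a bound $|a_n|\le Cn^k$ forces radius of convergence at least $1$; integrality disposes of the case of radius strictly greater than $1$ (the series is then a polynomial); and algebraicity over $\mathbb{Q}(t)$ leaves only finitely many singularities, so the unit circle cannot be a natural boundary. The appeal to the P\'olya--Carlson dichotomy then legitimately yields rationality, and there is no circularity, since Carlson's theorem is proved by entirely different means. The one inaccuracy is your remark that this dichotomy ``is precisely the classical result that the statement abbreviates.'' It is not: the statement abbreviates Fatou's own 1906 theorem --- an algebraic power series with integer coefficients converging in the unit disc is rational --- which is exactly the contrapositive you formulated, whereas the natural-boundary dichotomy (conjectured by P\'olya, proved by Carlson in 1921) is a strictly stronger and later result. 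So your argument derives the weaker classical theorem from the stronger one: logically valid, and it makes the structure of the deduction transparent, but it inverts the historical and logical economy, since Fatou's theorem admits direct proofs (for instance via Kronecker's Hankel-determinant criterion for rationality combined with the analytic behaviour of algebraic functions) that never mention natural boundaries. Your closing paragraph on singularity analysis correctly locates where the genuine difficulty of a self-contained argument would lie --- the integrality obstruction against asymptotics with non-integer Puiseux exponents --- which is precisely why both you and the paper end up resting on a classical citation rather than reproducing the hard arithmetic input.
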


Since the coefficients of the Hilbert series of graded algebras of finite Gelfand-Kirillov dimension grow polynomially,
as an immediate consequence of Theorem \ref{Theorem of Fatou} we obtain:

\begin{theorem}\label{dichotomy of finite GKdim}
The Hilbert series of a finitely generated graded algebra of finite Gelfand-Kirillov dimension is either rational or transcendental.
\end{theorem}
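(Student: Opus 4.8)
The plan is to reduce the statement to the theorem of Fatou (Theorem \ref{Theorem of Fatou}) by showing that finiteness of the Gelfand-Kirillov dimension forces the coefficients $\dim(R_n)$ of the Hilbert series to be polynomially bounded. Since these coefficients are nonnegative integers, Fatou's dichotomy then applies verbatim and yields that $H(R,t)$ is rational or transcendental.

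First I would fix a convenient generating space. Because $R$ is graded with generators of first degree, the natural choice is $V=R_1$, and then $V^k$ spans $R_k$ for every $k$, so that
\[
R^n=V^0+V^1+\cdots+V^n=R_0\oplus R_1\oplus\cdots\oplus R_n.
\]
Hence the growth function is exactly the sequence of partial sums of the Hilbert coefficients,
\[
g_V(n)=\dim(R^n)=\sum_{k=0}^{n}\dim(R_k).
\]
This identification is the conceptual bridge between the two notions of growth and is forced by the grading together with the hypothesis that the generators sit in degree $1$.

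Next I would translate the finiteness of $\text{GKdim}(R)=\limsup_{n\to\infty}\log_n(g_V(n))=\alpha<\infty$ into a genuine polynomial bound on the individual coefficients. For every $\varepsilon>0$ there is an $N$ with $\log_n(g_V(n))<\alpha+\varepsilon$ for all $n\geq N$, that is $g_V(n)<n^{\alpha+\varepsilon}$. Since $\dim(R_n)=g_V(n)-g_V(n-1)\leq g_V(n)$, each coefficient of $H(R,t)$ is bounded by $n^{\alpha+\varepsilon}$ for large $n$, so the coefficients grow polynomially.

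Finally, the coefficients $\dim(R_n)$ are nonnegative integers and, by the previous step, polynomially bounded, so Theorem \ref{Theorem of Fatou} gives at once that $H(R,t)$ is either rational or transcendental. The only point requiring care is the passage from the $\limsup$ defining the Gelfand-Kirillov dimension to a uniform polynomial bound valid for the individual coefficients rather than merely for their partial sums; once the growth function is identified with the partial sums above and one differences them, this is immediate, so I expect no serious obstacle.
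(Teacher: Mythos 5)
Your proposal is correct and follows exactly the paper's route: the paper derives the theorem as an immediate consequence of Fatou's theorem (Theorem \ref{Theorem of Fatou}), citing the fact that finite Gelfand-Kirillov dimension forces the coefficients of the Hilbert series to grow polynomially. The only difference is that you spell out the routine verification of that fact (identifying the growth function with the partial sums $\sum_{k\leq n}\dim(R_k)$ and differencing), which the paper leaves implicit.
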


The element $f(x_1,\ldots,x_n)\in K\langle X\rangle=K\langle x_1,x_2,\ldots\rangle$ is a {\it polynomial identity} for the algebra $R$
if $f(r_1,\ldots,r_n)=0$ for all $r_1,\ldots,r_n\in R$. If $R$ satisfies a nontrivial polynomial identity, it is called a {\it PI-algebra}.
The following theorem from 1982 is due to Berele \cite{B}.

\begin{theorem}\label{GKdim of PI-algebra}
Finitely generated PI-algebras are of finite Gelfand-Kirillov dimension.
\end{theorem}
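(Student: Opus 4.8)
The plan is to deduce finite Gelfand--Kirillov dimension from Shirshov's Height Theorem, which governs the word combinatorics of finitely generated PI-algebras. First I would fix a finite generating set $Y=\{y_1,\ldots,y_m\}$ of $R$. Since $R$ is a PI-algebra it satisfies a nontrivial identity of some degree $d$, and by the standard linearization procedure I may assume this identity is multilinear of degree $d$; after normalizing and fixing an order on monomials, its leading monomial has coefficient $1$, so that the identity expresses one ``ordered'' product of $d$ variables as a $K$-combination of the permuted ones.

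The key step is to invoke Shirshov's Height Theorem: for a finitely generated algebra satisfying an identity of degree $d$ there exist an integer $h$, the \emph{height}, and a finite set $W$ of words in $Y$, each of length less than $d$, such that $R$ is spanned as a $K$-vector space by the products
\[
w_{i_1}^{n_1}w_{i_2}^{n_2}\cdots w_{i_k}^{n_k},\qquad w_{i_j}\in W,\quad k\leq h,\quad n_j\geq 0.
\]
The decisive point is that the number $k$ of periodic blocks is bounded by $h$ uniformly in the total degree.

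From here the growth estimate is a routine count. A spanning product of degree at most $n$ is determined by an ordered choice of at most $h$ words from the finite set $W$ together with a tuple of nonnegative exponents; since each block contributes at least the length of its word, and hence at least $1$, to the degree, the exponents satisfy $\sum n_j\leq n$. Thus the number of such products of degree $\leq n$ is at most $(h+1)\lvert W\rvert^{h}\binom{n+h}{h}$, a polynomial in $n$ of degree $h$. Consequently the growth function obeys $g_Y(n)=\dim(R^n)\leq Cn^{h}$ for a suitable constant $C$, whence
\[
\text{GKdim}(R)=\limsup_{n\to\infty}\log_n g_Y(n)\leq h<\infty.
\]

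The main obstacle is Shirshov's Height Theorem itself. Its proof rests on a purely combinatorial lemma on words: any sufficiently long word in $Y$ either contains a high power of a word of length less than $d$, or is $d$-divisible, meaning it factors so that $d$ disjoint segments occur in strictly decreasing lexicographic order --- and this latter configuration is precisely the one that the multilinear identity rewrites as a combination of lexicographically smaller monomials. Iterating this reduction produces the bounded-height spanning set above. Granting the theorem, the polynomial bound on $g_Y(n)$, and therefore the finiteness of $\text{GKdim}(R)$, follow immediately.
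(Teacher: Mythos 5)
Your proof is correct, but there is nothing in the paper to compare it against: the paper does not prove this statement at all, it quotes it as a known theorem of Berele with the citation \cite{B}. What you have written is the standard derivation of Berele's theorem from Shirshov's Height Theorem, essentially as it appears in the textbook literature (including the author's own book \cite{D}). Both halves of your argument are sound. First, a nontrivial identity can indeed be multilinearized and normalized so that it rewrites $x_1x_2\cdots x_d$ as a $K$-linear combination of the monomials $x_{\sigma(1)}\cdots x_{\sigma(d)}$ with $\sigma\neq\mathrm{id}$. Second, granting the Height Theorem, your count is right: because the identity used in the rewriting is multilinear, the rewriting preserves degree, so $R^n$ is spanned by the products $w_{i_1}^{n_1}\cdots w_{i_k}^{n_k}$ of degree at most $n$, of which there are at most $(h+1)\vert W\vert^{h}\binom{n+h}{h}=O(n^h)$; hence $\text{GKdim}(R)\leq h<\infty$, which moreover gives a bound depending only on the PI-degree and the number of generators --- slightly more than the bare statement in the paper. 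The one caveat concerns where the mathematical weight lies: everything rests on Shirshov's Height Theorem, which you invoke with a sketch (a correct one --- the dichotomy between high powers of short words and $d$-divisibility, plus lexicographic descent) rather than prove. That is legitimate here, since Shirshov's theorem is a classical 1957 result logically prior to the statement being proved, but in a final writeup it should be cited as an external theorem rather than supported only by the sketch.
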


\begin{remark}
It is well known that the class of PI-algebras has nice structure and combinatorial theory.
From many points of view finitely generated PI-algebras are similar to commutative algebras.
Theorem \ref{GKdim of PI-algebra} is a confirmation of this similarity.
Nevertheless there are many differences. For example, the Gelfand-Kirillov dimension of a finitely generated commutative algebra is an integer.
On the other hand all examples of finitely generated PI-algebras of Gelfand-Kirillov dimension $\alpha\geq 2$ from \cite{BK}
are tensor products $K[y_1,\ldots,t_m]\otimes_KR$ where $R$ is a two-generated algebra $R$ of Gelfand-Kirillov dimension in the interval $[2,3]$.
The algebra $R$ satisfies the polynomial identity
\[
(x_1x_2-x_2x_1)(x_3x_4-x_4x_3)(x_5x_6-x_6x_5)=0
\]
and the same identity is satisfied by the tensor product.
For comparison, the examples in \cite[Theorem 9.4.11]{D} are two-generated and satisfy the polynomial identity
\[
(x_1x_2-x_2x_1)\cdots(x_{2m-1}x_{2m}-x_{2m}x_{2m-1})=0
\]
for a suitable $m$.
\end{remark}

The combination of Theorems \ref{dichotomy of finite GKdim} and \ref{GKdim of PI-algebra} gives:
\begin{theorem}\label{dichotomy Hilbert series of PI-algebras}
The Hilbert series of a finitely generated graded PI-algebra is either rational or transcendental.
\end{theorem}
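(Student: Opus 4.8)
The plan is to obtain the statement as an immediate consequence of the two theorems already in hand, chaining them in the only possible order. Let $R$ be a finitely generated graded PI-algebra. The first step is to remove the hypothesis of the dichotomy theorem that is not yet guaranteed, namely finiteness of the Gelfand-Kirillov dimension. This is exactly supplied by Berele's theorem (Theorem \ref{GKdim of PI-algebra}): since $R$ is finitely generated and satisfies a nontrivial polynomial identity, $\text{GKdim}(R)<\infty$. With this in place, $R$ satisfies all the hypotheses of Theorem \ref{dichotomy of finite GKdim} — it is finitely generated, graded, and of finite Gelfand-Kirillov dimension — so I would simply apply that theorem to conclude that $H(R,t)$ is either rational or transcendental.

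It is worth recording the mechanism that makes Theorem \ref{dichotomy of finite GKdim} do the work, since that is where the arithmetic input enters. Finiteness of $\text{GKdim}(R)$ forces the growth function, and hence the partial sums $\sum_{k\leq n}\dim(R_k)$, to be bounded by a polynomial in $n$; for a graded algebra this in turn bounds each coefficient $\dim(R_n)$ polynomially. Thus the coefficients of $H(R,t)$ are nonnegative integers of polynomial growth, and Fatou's theorem (Theorem \ref{Theorem of Fatou}) yields the dichotomy: an algebraic power series with polynomially bounded integer coefficients must already be rational, so the only alternative to rationality is transcendence.

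The genuine difficulty in this statement lives entirely in the two results being cited, not in their combination, which is purely logical. The hard part is Berele's theorem that finitely generated PI-algebras have finite Gelfand-Kirillov dimension, together with the classical theorem of Fatou underlying the dichotomy; once both are granted, no further estimate or construction is needed and the deduction is a one-line chain of implications. I would therefore present the argument as a short corollary rather than a self-contained proof, flagging only that the grading and finite generation of $R$ are both used — the grading to make $H(R,t)$ meaningful and to pass from polynomial growth of the growth function to polynomial growth of the individual coefficients, and finite generation to legitimize the appeal to Berele's theorem.
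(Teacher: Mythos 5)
Your proposal is correct and follows exactly the paper's argument: the paper derives this theorem as the immediate combination of Berele's theorem (Theorem \ref{GKdim of PI-algebra}) with the dichotomy for algebras of finite Gelfand-Kirillov dimension (Theorem \ref{dichotomy of finite GKdim}), which is precisely your chain of implications. Your additional remarks on how polynomial growth of the coefficients and Fatou's theorem power the dichotomy are accurate but belong to the proof of Theorem \ref{dichotomy of finite GKdim}, not to this one.
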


\section{Concluding remarks}
As we have mentioned in the introduction, if the power series $a(t)$ is rational, algebraic or transcendental, the same property has
the Hilbert series of the monomial and graded algebras $R$ constructed in Theorems \ref{main theorem} and \ref{polynomially bounded graded algebras}.
It is a natural question where to find algebraic and transcendental power series
$a(t)$ with nonnegative integer coefficients $a_n$. It is easy to construct transcendental power series. We shall discuss three well known ways.

As in the paper by Smith \cite{Sm},
if the coefficients $b_n$ of the power series $\displaystyle b(t)=\sum_{n\geq 1}b_nt^n$ with nonnegative integer coefficients
grow subexponentially and $b(t)$ is not a polynomial, then the expansion into a power series of the infinite product
\[
a(t)=\prod_{n\geq 1}\frac{t^n}{(1-t^n)^{b_n}}
\]
is also of subexponential (and not polynomial) growth. Hence $a(t)$ is transcendental.
The most famous example is the generating function
\[
p(t)=\prod_{n\geq 1}\frac{1}{1-t^n}=\sum_{n\geq 0}p_nt^n
\]
which counts the number $p_n$ of the partitions of $n$. Its asymptotics
\[
p_n\approx \frac{1}{4n\sqrt{3}}\exp\left(\pi\sqrt{\frac{2}{3}n}\right)
\]
was found by Hardy and Ramanujan \cite{HR} in 1918 and independently by Uspensky \cite{Us} in 1920.
The power series $p(t)$ is equal to the Hilbert series of the example of Ufnarovsky \cite{U}.

The theorem of Mahler \cite[p. 42]{Ma} provides other examples of transcendental power series --
the lacunary series with nonnegative integer coefficients. Recall that the power series $a(t)$ is {\it lacunary}, if
\[
a(t)=\sum_{k\geq 1}a_{n_k}t^{n_k},\quad a_{n_k}\not=0,\quad \lim_{k\to\infty}(n_{k+1}-n_k)=\infty.
\]
Maybe the best known example of such series is
\[
a(t)=\sum_{n\geq 1}t^{n!}
\]
which gives rise to the first explicitly given transcendental number $\displaystyle a\left(\frac{1}{10}\right)$,
the constant of Liouville \cite{Li}. Another example also based on the result of Mahler is given in the book of Nishioka
\cite[Theorem 1.1.2]{N}
\[
a(t)=\sum_{n\geq 0}t^{d^n},\quad d\geq 2.
\]

Finally, we may construct transcendental series with
polynomial or exponential growth of the coefficients using multiplicative functions $\alpha:{\mathbb N}\to {\mathbb C}$.
B\'ezivin \cite{Be} described the functions $\alpha$ such that the generating function $a(t)$ of the sequence $a_n=\alpha(n)$ is algebraic.
Combining with results of Leitmann and Wolke \cite{LW}
it follows that a power series whose coefficients are multiplicative is either transcendental or rational.
S\'ark\"ozy \cite{Sa} described multiplicative functions such that $a(t)$ is rational. As a consequence it is easy to construct multiplicative functions
such that the corresponding generating function is transcendental. See Bell, Bruin, Coons \cite{BBC} for details.
A simple example of transcendental generating function $a(t)$ is if define the multiplicative function $\alpha$ on prime numbers by
$\alpha(p)=q$, where the $q$'s are pairwise different primes and $\alpha(p)\not=p$ for all prime $p$.

It is more difficult to construct algebraic power series with nonnegative integer coefficients.
We shall mention two methods only.

Recently there are applications to monomial algebras of the theory of regular languages
and the theory of finite-state automata
which give new results and new proofs of old results providing algebras with rational and algebraic nonrational Hilbert series,
see La Scala \cite{LS} and La Scala, Piontkovski, and Tiwari \cite{LSPT} and the references there.

Another possible way is to consider the generating function which counts the planar rooted trees with given number of leaves
and fixed number of incoming branches in each vertex, see, e.g. Drensky and Holtkamp \cite{DH}. The simplest example is
the generating function which counts binary planar rooted trees (enumerating also the Catalan numbers).
The forthcoming paper by Drensky and Lalov \cite{DL} generalizes the methods of \cite{DH} and gives more examples
of algebraic power series with nonnegative integer coefficients.


\begin{thebibliography}{AAAA}

\bibitem[BBC]{BBC}
{\bf J.P. Bell, N. Bruin, M. Coons},
{\it Transcendence of generating functions whose coefficients are multiplicative},
Trans. Am. Math. Soc. {\bf 364} (2012), No. 2, 933-959.

\bibitem[B]{B}
{\bf A. Berele},
{\it Homogeneous polynomial identities},
Israel J. Math. {\bf 42} (1982), 258-272.

\bibitem[Be]{Be}
{\bf J.-P. B\'ezivin},
{\it Fonctions multiplicatives et \'equations diff\'erentielles},
Bull. Soc. Math. Fr. {\bf 123} (1995), No. 3, 329-349.

\bibitem[BK]{BK}
{\bf W. Borho, H.-P. Kraft},
{\it \"Uber die Gelfand-Kirillov-Dimension},
Math. Ann. {\bf 220} (1976), 1-24.

\bibitem[D]{D}
{\bf V. Drensky},
{\it Free Algebras and PI-Algebras. Graduate Course in Algebra},
Springer-Verlag, Singapore, 2000.

\bibitem[DH]{DH}
{\bf V. Drensky, R. Holtkamp},
{\it Planar trees, free nonassociative algebras, invariants, and elliptic integrals},
Algebra and Discrete Mathematics (2008), No. 2, 1-41.

\bibitem[DL]{DL}
{\bf V. Drensky, Ch. Lalov},
{\it Free magmas, planar trees, and their generating functions},
in preparation.

\bibitem[F]{F}
{\bf P. Fatou},
{\it S\'eries trigonom\'etriques et s\'eries de Taylor},
Acta Math. {\bf 30} (1906), 335-400.

\bibitem[G1]{G1}
{\bf V.E. Govorov},
{\it Graded algebras} (Russian),
Mat. Zametki {\bf 12} (1972), No. 2, 197-204.
Translation: Math. Notes {\bf 12} (1972), 552-556.

\bibitem[G2]{G2}
{\bf V.E. Govorov},
{\it On the dimension of graded algebras} (Russian),
Mat. Zametki {\bf 14} (1973), No. 2, 209-216.
Translation: Math. Notes {\bf 14} (1973), 678-682.

\bibitem[HR]{HR}
{\bf G.H. Hardy, S. Ramanujan},
{\it Asymptotic formulae in combinatory analysis},
Proc. Lond. Math. Soc. (2) {\bf 17} (1918), 75-115.

\bibitem[K]{K}
{\bf Y. Kobayashi},
{\it Another graded algebra with a nonrational Hilbert series},
Proc. Am. Math. Soc. {\bf 81} (1981), 19-22.

\bibitem[Ko]{Ko}
{\bf D. Ko\c{c}ak},
{\it Intermediate growth in finitely presented algebras},
Int. J. Algebra Comput. {\bf 27} (2017), No. 4, 391-401.

\bibitem[KL]{KL}
{\bf G.R. Krause, T.H. Lenagan},
{\it Growth of Algebras and Gelfand-Kirillov Dimension}, Revised ed.
Graduate Studies in Mathematics. {\bf 22}, American Mathematical Society (AMS), Providence, RI, 2000.

\bibitem[LS]{LS}
{\bf R. La Scala},
{\it Monomial right ideals and the Hilbert series of noncommutative modules},
J. Symb. Comput. {\bf 80} (2017), Part 2, 403-415.

\bibitem[LSPT]{LSPT}
{\bf La Scala, D. Piontkovski, S.K. Tiwari},
{\it Noncommutative algebras, context-free grammars and algebraic Hilbert series},
J. Symb. Comput. (to appear).

\bibitem[LW]{LW}
{\bf D. Leitmann, D. Wolke},
{\it Periodische und multiplikative zahlentheoretische Funktionen},
Monatsh. Math. {\bf 81} (1976), 279-289.

\bibitem[L]{L}
{\bf A.I. Lichtman},
{\it Growth in enveloping algebras}
Isr. J. Math. {\bf 47} (1984), 296-304.

\bibitem[Li]{Li}
{\bf J. Liouville},
{\it Sur des classes tr\`es \'etendues de quantit\'es dont valeur n'est ni alg\'ebrique,
ni m\^eme r\'educible \`a des irrationelles alg\'ebriques},
C.R. Acad. Sci., Paris, S\'er. A {\bf 18} (1844), 883-885.
J. Math. Pures Appl. {\bf 16} (1851), 133-142.

\bibitem[M]{M}
{\bf F.S. Macaulay},
{\it Some properties of enumeration in the theory of modular systems},
Proc. Lond. Math. Soc. (2) {\bf 26} (1927), 531-555.

\bibitem[Ma]{Ma}
{\bf K. Mahler},
{\it Lectures on Transcendental Numbers},
Edited and completed by B. Divi\v{s} and W.J. Le Veque,
Lecture Notes in Mathematics {\bf 546}, Springer-Verlag, Berlin-Heidelberg-New York, 1976.

\bibitem[N]{N}
{\bf K. Nishioka},
{\it Mahler Functions and Transcendence},
Lecture Notes in Mathematics {\bf 1631}, Springer-Verlag, Berlin, 1996.

\bibitem[P]{P}
{\bf V.M. Petrogradsky},
{\it Intermediate growth in Lie algebras and their enveloping algebras},
J.Algebra {\bf 179} (1996), 459-482.

\bibitem[Sa]{Sa}
{\bf A. S\'ark\"ozy},
{\it On multiplicative arithmetic functions satisfying a linear recursion},
Stud. Sci. Math. Hung. {\bf 13} (1978), 79-104.

\bibitem[Sh]{Sh}
{\bf J.B. Shearer},
{\it A graded algebra with a non-rational Hilbert series},
J. Algebra {\bf 62} (1980), 228-231.

\bibitem[Sm]{Sm}
{\bf M.K. Smith},
{\it Universal enveloping algebras with subexponential but not polynomially bounded growth}
Proc. Am. Math. Soc. {\bf 60} (1976), 22-24.

\bibitem[U]{U}
{\bf V.A. Ufnarovskij},
{\it Poincar\'e series of graded algebras} (Russian),
Mat. Zametki {\bf 27} (1980), 21-32.
Translation: Math. Notes {\bf 27} (1980), 12-18.

\bibitem[Us]{Us}
{\bf J.V. Uspensky},
{\it Asymptotic formulae for numerical functions which occur in the theory of partitions} (Russian),
Bull. Acad. Sci. URSS {\bf 14} (1920), 199-218.

\end{thebibliography}
\end{document}